\documentclass[11pt]{article}
\usepackage[margin=1in]{geometry}
\usepackage{amsmath,amssymb,amsthm}
\usepackage{booktabs}

\newtheorem{theorem}{Theorem}
\newtheorem{lemma}[theorem]{Lemma}

\newtheorem{corollary}[theorem]{Corollary}
\newtheorem{remark}[theorem]{Remark}
\newtheorem{definition}[theorem]{Definition}

\title{Asymptotic Tightness of the Pigeonhole Bound\\for Large-Order Davenport--Schinzel Sequences}
\author{Jesse Geneson}
\date{}

\begin{document}
\maketitle

\begin{abstract}
We prove that the pigeonhole upper bound $\lambda(s,m) \leq \binom{m}{2}(s+1)$ is asymptotically tight whenever $s/\!\sqrt{m} \to \infty$.  In particular, $\lambda(s,m) \sim \binom{m}{2}\,s$ in this regime.  As corollaries: $\lambda(n,n)/n^3 \to \frac{1}{2}$, resolving the leading constant from the previously known interval $[\frac{1}{3}, \frac{1}{2}]$; and more generally $\lambda(an,bn) \sim \frac{ab^2}{2}\,n^3$ for any constants $a,b > 0$.  
\end{abstract}

\section{Introduction}

A \emph{Davenport--Schinzel sequence} of order~$s$ over an $m$-letter alphabet is a sequence with no immediate repetitions and no alternating subsequence of length $s + 2$.  Let $\lambda(s,m)$ denote the maximum length of such a sequence.  A standard pigeonhole argument~\cite[p.\,3]{K02} gives
\begin{equation}\label{eq:ub}
\lambda(s,m) \;\leq\; \binom{m}{2}(s+1).
\end{equation}
When $s$ is fixed, this is far from tight: $\lambda(s,m)$ is only slightly superlinear in~$m$~\cite{ASS89,N10,P15}.  However, Roselle and Stanton~\cite{RS71} showed that for fixed~$m$, $\lim_{s \to \infty} \lambda(s,m)/s = \binom{m}{2}$, so the bound is sharp in the leading constant as a function of~$s$ alone.

When both $s$ and $m$ grow, the situation is less understood.  Wellman and Pettie~\cite{WP16} introduced new constructions bridging the fixed-order and large-order regimes, but noted that on the diagonal $s = m$, the leading constant of $\lambda(m,m)/m^3$ was known only to lie in $[\frac{1}{3}, \frac{1}{2}]$.

We prove that the $S_2$ construction of~\cite{WP16}, with parameters chosen to maximize length subject to the order constraint, already matches~\eqref{eq:ub} in the leading term whenever $s$ grows faster than $\sqrt{m}$.

\section{A Uniform Lower Bound via the Roselle--Stanton Construction}

We recall the Roselle--Stanton construction~\cite{RS71} as presented in~\cite[Section~1.2]{WP16}.

\begin{definition}\label{def:rs}
For integers $s \geq 2$ and $m \geq 2$, the sequence $\mathrm{RS}(s,m)$ over the alphabet $[m] = \{1,\ldots,m\}$ is defined recursively as follows.

\medskip\noindent\textbf{Base cases.}
\begin{itemize}
\item $\mathrm{RS}(s,2) = 1,2,1,2,\ldots$ of length $s + 1$ (alternating, starting with~$1$).
\item $\mathrm{RS}(2,m) = 1,2,1,3,1,4,\ldots,1,m,1$ of length $2m-1$ for $m \geq 3$.
\end{itemize}

\medskip\noindent\textbf{Recursive case} ($s \geq 3$, $m \geq 3$):
\[
\mathrm{RS}(s,m) \;=\; \mathrm{Alt}(s,m) \;\cdot\; \mathrm{RS}(s-1,\,m-1)[m,\,m\!-\!1,\,\ldots,\,2],
\]
where $\mathrm{RS}(s-1,m-1)[m,m\!-\!1,\ldots,2]$ denotes a copy of $\mathrm{RS}(s-1,m-1)$ with symbol~$k$ relabeled to symbol~$m - k + 1$ (so first appearances occur in the order $m, m\!-\!1, \ldots, 2$), and
\begin{equation}\label{eq:alt}
\mathrm{Alt}(s,m) \;=\; \underbrace{1,2,\,1,2,\,\ldots,\,1,2}_{\lceil(s-2)/2\rceil\text{ pairs}} ,\; \underbrace{1,3,\,1,3,\,\ldots,\,1,3}_{\lceil(s-2)/2\rceil\text{ pairs}} ,\;\ldots\;,\; \underbrace{1,m,\,1,m,\,\ldots,\,1,m}_{\lceil(s-2)/2\rceil\text{ pairs}} ,\; 1.
\end{equation}
That is, $\mathrm{Alt}(s,m)$ consists of, for each $k = 2, \ldots, m$ in succession, exactly $\lceil(s-2)/2\rceil$ copies of the pair $1,k$, followed by a single trailing~$1$.
\end{definition}

\noindent The reversal of the alphabet in the recursive call is essential: since $\mathrm{Alt}(s,m)$ introduces the symbols $2, 3, \ldots, m$ in increasing order, the recursive part must introduce them in decreasing order to avoid unnecessary alternations between non-retired symbols~\cite[Section~1.2]{WP16}.

It is straightforward to verify that $\mathrm{RS}(s,m)$ is a DS$(s,m)$ sequence (see~\cite{RS71} or~\cite[Section~1.2]{WP16}), so $\lambda(s,m) \geq |\mathrm{RS}(s,m)|$.

\begin{lemma}\label{lem:altlen}
$|\mathrm{Alt}(s,m)| = 2(m-1)\lceil(s-2)/2\rceil + 1$.
\end{lemma}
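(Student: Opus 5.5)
This is a direct count from the definition of $\mathrm{Alt}(s,m)$ in~\eqref{eq:alt}. No earlier result is needed, only the bookkeeping of how the sequence is assembled.

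First, index the blocks of $\mathrm{Alt}(s,m)$ by $k = 2, \ldots, m$, which gives $m-1$ blocks. Write $p = \lceil (s-2)/2 \rceil$. By definition, block $k$ is the pair $1,k$ repeated exactly $p$ times, so it contributes $2p$ symbols. After the last block comes a single trailing~$1$.

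Summing over the blocks gives $(m-1)\cdot 2p$ symbols. Adding the trailing~$1$ gives $2(m-1)\lceil(s-2)/2\rceil + 1$, as claimed.

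The only point needing care is that length here means the number of terms listed in~\eqref{eq:alt}. No symbols are merged or deleted at the block boundaries, since the definition is a plain concatenation of the listed pairs followed by the final~$1$. For the same reason, the fact that consecutive blocks end in $k$ and begin with $1$ is irrelevant to the count. There is no real obstacle; one might only note that for $s = 3, 4$ we get $p = 1$, and the formula still holds by the same count.
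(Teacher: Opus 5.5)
Your proposal is correct and is exactly the paper's argument: $m-1$ blocks, one for each $k\in\{2,\ldots,m\}$, each of length $2\lceil(s-2)/2\rceil$, plus the trailing~$1$. Your side remarks are also accurate: there is no merging at block boundaries, and $p=1$ when $s=3,4$.
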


\begin{proof}
There are $m - 1$ blocks (one for each $k \in \{2,\ldots,m\}$), each of length $2\lceil(s-2)/2\rceil$, plus the trailing~$1$.
\end{proof}

\noindent Denoting $L(s,m) = |\mathrm{RS}(s,m)|$, the length satisfies the base cases
\[
L(s,2) = s+1, \qquad L(2,m) = 2m-1,
\]
and, for $s \geq 3$ and $m \geq 3$, the recurrence
\begin{equation}\label{eq:rsrec}
L(s,m) \;=\; 2(m-1)\bigl\lceil(s-2)/2\bigr\rceil + 1 \;+\; L(s-1,\,m-1).
\end{equation}

\subsection{A Uniform Lower Bound}

The classical consequence of~\eqref{eq:rsrec} is that $\lambda(s,m)/s \to \binom{m}{2}$ as $s \to \infty$ for fixed~$m$.  We need a version valid when $m$ grows with~$s$.

\begin{lemma}\label{lem:rs}
For all integers $s \geq m \geq 2$,
\[
\lambda(s,m) \;\geq\; L(s,m) \;\geq\; \binom{m}{2}(s - m) + m.
\]
\end{lemma}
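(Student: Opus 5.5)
The plan is to prove the second inequality by induction on $m$, using the recurrence \eqref{eq:rsrec} together with Lemma~\ref{lem:altlen}. The first inequality $\lambda(s,m)\ge L(s,m)$ is just the already-noted fact that $\mathrm{RS}(s,m)$ is a DS$(s,m)$ sequence. The hypothesis $s\ge m$ is preserved by the recursion, because $(s,m)\mapsto(s-1,m-1)$ keeps $s-1\ge m-1$. So the induction can be run entirely inside the region $s\ge m\ge 2$.

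\emph{Base case $m=2$.} Here $s\ge 2$, and the base case gives $L(s,2)=s+1$. The target is $\binom{2}{2}(s-2)+2=s$, so the inequality holds. Note that the other base case $L(2,m)$ with $m\ge3$ never arises, since $s\ge m$ rules it out.

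\emph{Inductive step, $m\ge 3$.} Then $s\ge m\ge 3$, so \eqref{eq:rsrec} applies. I will bound $\lceil (s-2)/2\rceil\ge (s-2)/2$ and apply the induction hypothesis to $L(s-1,m-1)$, which is valid since $s-1\ge m-1\ge 2$. This gives
\[
L(s,m)\;\ge\;(m-1)(s-2)+1+\binom{m-1}{2}(s-m)+(m-1)
\;=\;(m-1)(s-2)+\binom{m-1}{2}(s-m)+m .
\]
Using $\binom{m}{2}-\binom{m-1}{2}=m-1$, what remains to show is
\[
(m-1)(s-2)\ \ge\ (m-1)(s-m).
\]
This holds because $m\ge 2$.

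There is no real obstacle. The only points needing care are the ones already addressed: checking that the recursion stays in the regime $s\ge m$, and checking that only the $m=2$ base case is ever reached.
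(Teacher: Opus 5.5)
Your proof is correct and matches the paper's argument essentially line for line. Both run induction on $m$ from the base case $L(s,2)=s+1\ge s$, combine the recurrence with the inductive hypothesis at $(s-1,m-1)$, and reduce to $2\lceil (s-2)/2\rceil \ge s-2 \ge s-m$. Your explicit check that the $L(2,m)$ base case is never reached when $s\ge m$ is a small point the paper leaves implicit.
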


\begin{proof}
We prove $L(s,m) \geq \binom{m}{2}(s-m) + m$ by induction on~$m$.

\medskip\noindent\textbf{Base case} ($m = 2$): $L(s,2) = s + 1 \geq \binom{2}{2}(s - 2) + 2 = s$.

\medskip\noindent\textbf{Inductive step} ($m \geq 3$): By~\eqref{eq:rsrec} and the inductive hypothesis (applicable since $s - 1 \geq m - 1$):
\begin{align*}
L(s,m) &= 2(m\!-\!1)\bigl\lceil(s\!-\!2)/2\bigr\rceil + 1 + L(s\!-\!1,\,m\!-\!1) \\[3pt]
&\geq 2(m\!-\!1)\bigl\lceil(s\!-\!2)/2\bigr\rceil + 1 + \binom{m-1}{2}(s - m) + (m\!-\!1) \\[3pt]
&= 2(m\!-\!1)\bigl\lceil(s\!-\!2)/2\bigr\rceil + \binom{m-1}{2}(s-m) + m.
\end{align*}
It suffices to show $2(m\!-\!1)\lceil(s\!-\!2)/2\rceil \geq (m\!-\!1)(s - m)$, since $\binom{m-1}{2} + (m\!-\!1) = \binom{m}{2}$.  Since $2\lceil k/2 \rceil \geq k$, we have $2\lceil(s-2)/2\rceil \geq s - 2 \geq s - m$.
\end{proof}

\section{The $S_2$ Construction}

Let $q$ be a prime power. Following~\cite[Theorem~3.1]{WP16}, construct $A \in \{0,1\}^{q^2 \times q^2}$ with rows $(x,v) \in \mathbb{F}_q^2$, columns $(c_0, c_1) \in \mathbb{F}_q^2$, and $A((x,v),(c_0,c_1)) = 1$ iff $c_0 + c_1 x = v$.  This matrix has $q$~ones per row and column, and avoids $2 \times 2$ all-ones submatrices.

\subsection{Sequence}
For a parameter $\hat{s} \geq q$, let $C_i$ be the column support of row~$i$ ($|C_i| = q$).  For each~$i$, fix a DS$(\hat{s}, q)$ sequence~$\sigma_i$ of length $\lambda(\hat{s},q)$ over the alphabet~$C_i$. Define
\[
S_2(\hat{s}, q) \;=\; \sigma_1 \cdot \sigma_2 \cdots \sigma_{q^2},
\]
removing any immediate repetitions at copy boundaries.  This deletion affects at most $q^2 - 1$ symbols and does not increase alternation lengths.  The construction is exactly the specialization $t = 2$ of $S_t(\hat{s}, q)$ defined in~\cite[Section~4]{WP16}.

\subsection{Properties}
By~\cite[Section~4]{WP16} specialized to $t = 2$:
\begin{itemize}
\item \textbf{Alphabet size}: $q^2$ (the columns of~$A$).
\item \textbf{Order}: at most $\hat{s} + 2q - 2$.  This is the $t = 2$ case of the recursion for the maximum alternation length~$s_t$ in~$S_t$ given in~\cite[Section~4]{WP16}, namely $s_1 = \hat{s} + 1$ and $s_t \leq (t-1)\,s_{t-1} + 2(q - t + 1)$; hence $s_2 \leq \hat{s} + 2q - 1$ and the DS order is $\leq \hat{s} + 2q - 2$.
\item \textbf{Length}: $q^2 \cdot \lambda(\hat{s},q) - O(q^2) \;\geq\; q^2\bigl[\binom{q}{2}(\hat{s} - q) + q\bigr] - O(q^2)$ by Lemma~\ref{lem:rs}.
\end{itemize}

\section{Main Result}

\begin{theorem}\label{thm:general}
Let $s = s(n)$ and $m = m(n)$ be functions tending to infinity with $s/\!\sqrt{m} \to \infty$.  Then
\[
\lambda(s, m) \;=\; \binom{m}{2}\,s\;\bigl(1 - o(1)\bigr).
\]
\end{theorem}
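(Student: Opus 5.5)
The upper bound $\lambda(s,m)\le\binom{m}{2}(s+1)=\binom{m}{2}s(1+o(1))$ is immediate from~\eqref{eq:ub}, so only the lower bound needs work. I will split into two regimes according to how $s$ compares with $m$.

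\textbf{Regime $s/m\to\infty$.} Here Lemma~\ref{lem:rs} applies directly (for $n$ large, $s\ge m$) and gives $\lambda(s,m)\ge\binom{m}{2}(s-m)=\binom{m}{2}s(1-m/s)$, which is $\binom{m}{2}s(1-o(1))$. More generally, I would handle any subsequence along which $m/s\to 0$ this way. The remaining case is therefore $m\ge\epsilon s$ along a subsequence while still $s\ge\sqrt{m}\,\omega$ with $\omega\to\infty$; by passing to subsequences, it suffices to show that every sequence has a subsequence achieving the bound.

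\textbf{Regime $\sqrt m\ll s\lesssim m$: use $S_2$.} Choose a prime power $q$ with $q^2\le m$ and $q^2\ge m(1-o(1))$. Bertrand-type results (primes in $[x,x+x^{0.6}]$) give a prime $q\in[\sqrt m-m^{0.3},\sqrt m]$, so $q^2=m(1-o(1))$. Set $\hat s=s-2q+2$, so that $S_2(\hat s,q)$ has order at most $s$ and alphabet $q^2\le m$. Since $\lambda$ is monotone in $m$ and $s$, we get $\lambda(s,m)\ge|S_2(\hat s,q)|$. Because $s/\sqrt m\to\infty$, we have $q=o(s)$, hence $\hat s=s(1-o(1))$ and $\hat s\ge q$ for large $n$. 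By the length property of $S_2$,
\[
|S_2(\hat s,q)|\ \ge\ q^2\binom{q}{2}(\hat s-q)-O(q^2)\ =\ \tfrac{q^4}{2}\,s\,(1-o(1)),
\]
using $q\to\infty$ and $\hat s-q=s(1-o(1))$. Since $q^4=m^2(1-o(1))$, this equals $\binom{m}{2}s(1-o(1))$. This argument in fact covers the entire regime $s/\sqrt m\to\infty$, so the case split is only a convenience.

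\textbf{Main obstacle.} The delicate step is approximating $\sqrt m$ by a prime power $q$ with relative error $o(1)$, while keeping $q^2\le m$ and $2q=o(s)$. I would invoke the prime number theorem, which gives a prime in $[x(1-\delta),x]$ for every fixed $\delta$ and large $x$, and then let $\delta\to0$ slowly. A secondary check is that the $O(q^2)$ loss from deleting boundary repetitions is negligible: it is $O(m)=o(m^2 s)$. Finally, the order bound $\hat s+2q-2$ taken from~\cite{WP16} must be used exactly as stated in the Properties subsection.
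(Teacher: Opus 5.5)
Your proof is correct and is essentially the paper's argument: the pigeonhole upper bound, then $S_2(\hat s,q)$ with $q$ a prime just below $\sqrt m$ (from the prime number theorem or a short-interval prime result) and $\hat s=s-2q+2$; as you note yourself, the preliminary case split via Lemma~\ref{lem:rs} is unnecessary. Your multiplicative $(1-o(1))$ bookkeeping is slightly more careful than the paper's additive error terms $O(q^5)$ and $O(m^{5/2})$, which drop the $-q^3s/2$ contribution and the $o(1)$ loss in replacing $q^4$ by $m^2$, although this does not affect the paper's final asymptotic conclusion.
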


\begin{proof}
The upper bound $\lambda(s,m) \leq \binom{m}{2}(s+1) = \binom{m}{2}s\,(1 + 1/s)$ is immediate from~\eqref{eq:ub}.

For the lower bound, let $q$ be the largest prime with $q \le \sqrt{m}$.  
By the prime number theorem, $q = \sqrt{m}\,(1 - o(1))$, hence
$q^2 = m\,(1 - o(1))$.  Set
\[
\hat{s} \;=\; s - 2q + 2.
\]
Since $s/\!\sqrt{m} \to \infty$ and $q \le \sqrt{m}$, we have
$\hat{s}/q \to \infty$; in particular $\hat{s} \ge q$ for all
sufficiently large $n$, so the construction $S_2(\hat{s},q)$ is
well-defined. The sequence $S_2(\hat{s}, q)$ has:
\begin{itemize}
\item alphabet size $q^2 \leq m$,
\item order $\leq \hat{s} + 2q - 2 = s$,
\item length $\geq q^2 \cdot \bigl[\binom{q}{2}(\hat{s} - q) + q\bigr] - O(q^2)$.
\end{itemize}
By monotonicity of $\lambda$ in both parameters (enlarging the
alphabet or the allowed order cannot decrease the maximum length),
$\lambda(s,m) \ge |S_2|$.  The length satisfies
\[
|S_2| \;\ge\; q^2 \binom{q}{2}(\hat{s}-q)
= \frac{q^4}{2}\,s \;-\; O(q^5).
\]
Since $q = \sqrt{m}\,(1-o(1))$, we have $q^4 = m^2(1-o(1))$
and $q^5 = O(m^{5/2})$, hence
\[
|S_2| \;\ge\; \frac{m^2}{2}\,s \;-\; O(m^{5/2})
= \binom{m}{2}\,s\,(1-o(1)).
\]
\end{proof}

\section{Corollaries}

\begin{corollary}\label{cor:diagonal}
$\displaystyle\lim_{n \to \infty} \frac{\lambda(n,n)}{n^3} = \frac{1}{2}.$
\end{corollary}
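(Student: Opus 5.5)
The plan is to specialize Theorem~\ref{thm:general} to the diagonal $s = m = n$ and then translate the asymptotic equivalence into a statement about $\lambda(n,n)/n^3$. First I check the hypotheses. Both $s(n)=n$ and $m(n)=n$ tend to infinity. The ratio satisfies $s/\sqrt{m} = n/\sqrt{n} = \sqrt{n} \to \infty$. So Theorem~\ref{thm:general} applies and gives
\[
\lambda(n,n) \;=\; \binom{n}{2}\, n\,\bigl(1 - o(1)\bigr).
\]

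Next I divide by $n^3$. Since $\binom{n}{2} n / n^3 = \frac{n(n-1)}{2n^2} = \frac{1}{2}\bigl(1 - \frac{1}{n}\bigr) \to \frac{1}{2}$, the product with $(1-o(1))$ also tends to $\frac12$. This yields the claimed limit.

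For the limit itself, rather than just an asymptotic equivalence, I will make the two-sided squeeze explicit. The upper bound~\eqref{eq:ub} gives
\[
\lambda(n,n)/n^3 \;\le\; \binom{n}{2}(n+1)/n^3 \;\to\; \tfrac12 .
\]
The lower-bound half of Theorem~\ref{thm:general} gives $\liminf \lambda(n,n)/n^3 \ge \frac12$. Together these show that $\limsup$ and $\liminf$ coincide at $\frac{1}{2}$.

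There is no substantive obstacle here. The only point worth checking is that the $o(1)$ in Theorem~\ref{thm:general} is taken along the sequence indexed by $n$, and that $s=m=n$ is a legitimate instance of the functions $s(n), m(n)$ in that statement. Both hold directly.
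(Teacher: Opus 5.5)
Your proposal is correct and follows the paper's proof, which just applies Theorem~\ref{thm:general} with $s=m=n$. Your explicit squeeze, using~\eqref{eq:ub} for the $\limsup$ and the theorem's lower bound for the $\liminf$, spells out the routine division by $n^3$ that the paper leaves implicit.
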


\begin{proof}
Apply Theorem~\ref{thm:general} with $s = m = n$. 
\end{proof}

\begin{corollary}\label{cor:ratios}
For any constants $a, b > 0$, $\lambda(an, bn) \;\sim\; \frac{ab^2}{2}\,n^3.$
\end{corollary}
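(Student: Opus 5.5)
The plan is to apply Theorem~\ref{thm:general} with $s(n) = an$ and $m(n) = bn$. Strictly, $\lambda$ takes integer arguments, so we read $\lambda(an,bn)$ as $\lambda(\lfloor an\rfloor,\lfloor bn\rfloor)$, or restrict to $n$ with $an,bn$ integral. Rounding changes $s$ and $m$ only by $O(1)$, which is negligible at the end.

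First we check the hypotheses. Both $s=\lfloor an\rfloor$ and $m=\lfloor bn\rfloor$ tend to infinity because $a,b>0$. Moreover,
\[
s/\sqrt{m} \sim \frac{a}{\sqrt b}\sqrt n \to \infty.
\]
Theorem~\ref{thm:general} then gives
\[
\lambda(s,m) = \binom{m}{2}\, s\,(1-o(1)),
\]
and the upper bound $\binom{m}{2}(s+1)$ gives the matching $(1+o(1))$ side. Hence $\lambda(s,m)\sim\binom{m}{2}s$.

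It remains to evaluate the asymptotics:
\[
\binom{m}{2} = \frac{m(m-1)}{2} \sim \frac{b^2n^2}{2}, \qquad s\sim an,
\]
so $\lambda(s,m)\sim \frac{ab^2}{2}n^3$, since asymptotic equivalences multiply.

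The only point needing care is the integrality and rounding of $an$ and $bn$. This is handled by the floor convention, since $\lfloor an\rfloor = an(1+o(1))$. There is no genuine obstacle; Corollary~\ref{cor:diagonal} is the special case $a=b=1$.
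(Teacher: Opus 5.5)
Your proposal is correct and follows the paper's proof, which simply substitutes $s=an$, $m=bn$ into Theorem~\ref{thm:general}. Your checks are details the paper leaves implicit: that $s/\sqrt{m}\sim (a/\sqrt{b})\sqrt{n}\to\infty$, that the pigeonhole bound gives the upper side, and that a floor convention handles non-integral $an,bn$.
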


\begin{proof}
Set $s = an$, $m = bn$ in Theorem~\ref{thm:general}. 
\end{proof}

\begin{corollary}\label{cor:poly}
Fix $\alpha > \frac{1}{2}$.  Then $\lambda(n^\alpha, n) \;\sim\; \frac{n^{2+\alpha}}{2}$.
\end{corollary}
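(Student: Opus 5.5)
The plan is to specialize Theorem~\ref{thm:general} with $s = s(n) = n^\alpha$ (more precisely $\lfloor n^\alpha \rfloor$, since the order must be an integer) and $m = m(n) = n$. The first step is to check the hypotheses. Both $s$ and $m$ tend to infinity, since $\alpha > \tfrac12 > 0$. Moreover
\[
s/\sqrt{m} = \lfloor n^\alpha\rfloor / n^{1/2} \sim n^{\alpha - 1/2} \to \infty,
\]
and this is exactly where the assumption $\alpha > \tfrac12$ enters.

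Theorem~\ref{thm:general} then gives $\lambda(s,n) = \binom{n}{2}\,s\,(1-o(1))$. The upper bound $\lambda(s,n) \le \binom{n}{2}(s+1)$ from \eqref{eq:ub} is already $\binom{n}{2}s(1+o(1))$, so together these yield $\lambda(s,n) \sim \binom{n}{2} s$.

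The remaining step is to simplify the main term. We have $\binom{n}{2} = \tfrac{n^2}{2}(1 - 1/n) \sim \tfrac{n^2}{2}$, and $\lfloor n^\alpha \rfloor = n^\alpha(1 - O(n^{-\alpha})) \sim n^\alpha$. Multiplying these asymptotic equivalences gives $\binom{n}{2}s \sim n^{2+\alpha}/2$, which is the claimed $\lambda(n^\alpha,n) \sim n^{2+\alpha}/2$.

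No step is a genuine obstacle. The only care point is the interpretation of $\lambda(n^\alpha, n)$ when $n^\alpha$ is not an integer. I would read it as $\lambda(\lfloor n^\alpha\rfloor, n)$, as above. If the ceiling is meant instead, the same argument applies verbatim, because the rounding changes $s$ only by a factor $1+o(1)$. Alternatively, one can note that $\lambda$ is monotone in $s$ and sandwich between the floor and the ceiling, both of which give the same asymptotics.
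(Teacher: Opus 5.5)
Your proposal is correct and follows the paper's proof: you specialize Theorem~\ref{thm:general} with $s = n^\alpha$ and $m = n$, and check that $s/\sqrt{m} = n^{\alpha-1/2} \to \infty$, which is exactly where $\alpha > \frac{1}{2}$ is used. Your handling of the rounding of $n^\alpha$ (via $\lfloor n^\alpha \rfloor$, or a sandwich using monotonicity of $\lambda$ in $s$) and the explicit simplification $\binom{n}{2}\lfloor n^\alpha\rfloor \sim n^{2+\alpha}/2$ are details the paper leaves implicit.
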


\begin{proof}
Set $s = n^\alpha$, $m = n$.  Then $s/\!\sqrt{m} = n^{\alpha - 1/2} \to \infty$.
\end{proof}

\begin{remark}
Corollary~\ref{cor:diagonal} improves the Roselle--Stanton lower bound $\lambda(n,n) \geq \frac{n^3}{3} - O(n^2)$ to $\frac{n^3}{2} - O(n^{5/2})$, matching the pigeonhole upper bound in the leading term.  Wellman and Pettie~\cite[Section~6]{WP16} state that ``the true leading constant of $\lambda(n,n)$ is only known approximately; it is in the interval $[\frac{1}{3}, \frac{1}{2}]$.''  Corollary~\ref{cor:diagonal} resolves this to~$\frac{1}{2}$.
\end{remark}

\section*{Acknowledgments}
Claude 4.6 and GPT 5.2 were used for proof development, exposition, and revision.


\begin{thebibliography}{9}

\bibitem{ASS89}
P.~Agarwal, M.~Sharir, and P.~Shor,
\emph{Sharp upper and lower bounds on the length of general Davenport--Schinzel sequences},
J.~Combin.\ Theory Ser.~A~\textbf{52} (1989), 228--274.

\bibitem{K02}
M.~Klazar,
\emph{Generalized Davenport--Schinzel sequences: results, problems, and applications},
Integers~\textbf{2} (2002), A11.

\bibitem{N10}
G.~Nivasch,
\emph{Improved bounds and new techniques for Davenport--Schinzel sequences and their generalizations},
J.~ACM~\textbf{57}(3) (2010).

\bibitem{P15}
S.~Pettie,
\emph{Sharp bounds on Davenport--Schinzel sequences of every order},
J.~ACM~\textbf{62}(5):36 (2015).

\bibitem{RS71}
D.\,P.~Roselle and R.\,G.~Stanton,
\emph{Some properties of Davenport--Schinzel sequences},
Acta Arithmetica~\textbf{XVII} (1971), 355--362.

\bibitem{WP16}
J.~Wellman and S.~Pettie,
\emph{Lower bounds on Davenport--Schinzel sequences via rectangular Zarankiewicz matrices},
Discrete Math.~\textbf{341} (2018), 1987--1993.

\end{thebibliography}
\end{document}